\newtheorem{theorem}{Theorem}[section]
\newtheorem{lemma}[theorem]{Lemma}
\newtheorem{proposition}[theorem]{Proposition}
\newtheorem{notation}[theorem]{Notation}
\newtheorem{definition}[theorem]{Definition}
\newtheorem{remark}[theorem]{Remark}
\newtheorem{example}[theorem]{Example}
\def\rg{\frak g}
\def\beq#1{\begin{equation}\label{#1}}
\def\eeq{\end{equation}}
\def\Aut{{\hbox{\bf Aut}\;}}
\def\Out{\hbox{\bf Out}\;}
\def\ad{\hbox{\bf ad}\;}
\def\Int{\hbox{\bf Int}\;}
\def\Der{\hbox{\bf Der}\;}
\def\kernel{{\hbox{\bf Ker}\;}}
\def\D{\mathcal{D}}
\def\colorf{\color{black}}
\def\beq#1{\colorf\begin{equation}\label{#1}}
\def\eeq{\end{equation}\color{black}}
\title{The Existence of Coupling in the Category of Transitive Lie Algebroid}
\author{Xiaoyu Li \\(Harbin Institute of technology), \\A.S. Mishchenko \\(Harbin Institute of technology, \\Moscow State Lomonosov University), }
\begin{document}
\maketitle

\begin{abstract}
The coupling of the tangent bundle $TM$ with the Lie algebra bundle $L$
(\cite{Mck-2005}, Definition 7.2.2) plays the crucial role
in the classification of the transitive Lie algebroids for Lie algebra  bundle $L$
with fixed finite dimensional Lie algebra $\rg$ as a fiber of $L$.
Here we give a necessary and sufficient condition for the existence of such a coupling.
Namely we define a new topology on the group $\Aut(\rg)$ of all automorphisms of Lie algebra $\rg$ and show that tangent bundle $TM$ can be coupled with the Lie algebra bundle $L$  if and only if the Lie algebra bundle L admits a local trivial structure with structural group endowed with such new topology.
\end{abstract}

\section{Introduction and Preliminaries}

For every transitive Lie algebroid, the adjoint bundle is a Lie algebra bundle and can be coupled with the tangent bundle of its base smooth manifold (See \cite{Mck-2005}, Theorem 6.5.1 and Definition 7.3.4). Conversely, there is a problem  whether there exists a transitive Lie algebroid with a given Lie algebra bundle as adjoint bundle. Mackenzie was concerned with this problem and formulated the definition of coupling and constructed the obstruction class that depends on the coupling in order to give a criterion of existence of a transition Lie algebroid(See \cite{Mck-2005}, Section 7.2). The results from  this problem are used for the description of the classifying space of
transitive Lie algebroids (See \cite{Mi-2011}). Since in \cite{Mck-2005} the coupling was assumed to  exist, it is natural to consider  the problem of existence of a coupling. In this paper, we show a necessary and sufficient condition for  existence of a  coupling of the tangent bundle $TM$ with the Lie algebra bundle $L$.

In the beginning, we shall give  some definitions and results about the Lie algebra and differential geometry.

\begin{definition}(See \cite{Hall-2004} )
Let $\rg$ be a finite dimensional Lie algebra. Let $\Aut\rg$ denote the group of Lie algebra automorphisms of $\rg$ and $\Der\rg$ denote the Lie algebra of derivations of $\rg$. Define the exponential map
$
exp:\Der\rg\rightarrow \Aut\rg
$
by the formula
$$
exp:\psi\mapsto \sum\limits_{i=1}^{\infty}\frac{\psi^{i}}{i!}
$$
where $\psi\in\Der\rg$. Usually we denote $exp(\psi)$ by $e^{\psi}$.
\end{definition}

\begin{definition}
Let $\rg$ be a finite dimensional Lie algebra. An automorphism of the form $exp(\ad u)$, where $u\in\rg$, is called \emph{inner}. More generally, the subgroup of $\Aut\rg$ generated by those is denoted by $\Int \rg$ and its elements are called \emph{inner automorphisms}.
\end{definition}

\begin{proposition}(See \cite{Mck-2005}, \cite{Hall-2004})\label{ProInt}
The subgroup $\Int \rg$ is a normal Lie subgroup of $\Aut\rg$. The Lie algebra of
the group $\Int\rg$ is $\ad\rg$.
\end{proposition}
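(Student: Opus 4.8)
The plan is to treat $\Aut\rg$ as a Lie group via its standard realization as a closed subgroup of $GL(\rg)$, whose Lie algebra is $\Der\rg$ (see \cite{Hall-2004}), and then to identify $\Int\rg$ with the connected integral subgroup determined by the subalgebra $\ad\rg\subseteq\Der\rg$. Both assertions of the proposition will then follow from the subalgebra--subgroup correspondence together with two direct computations.

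First I would verify that $\ad\rg$ is not merely a linear subspace but an ideal of $\Der\rg$. For $D\in\Der\rg$ and $u\in\rg$ one computes, for every $v\in\rg$,
$$[D,\ad u](v)=D[u,v]-[u,Dv]=[Du,v]=\ad(Du)(v),$$
using the derivation property $D[u,v]=[Du,v]+[u,Dv]$. Hence $[D,\ad u]=\ad(Du)\in\ad\rg$, so $\ad\rg$ is an ideal, and in particular a Lie subalgebra, of $\Der\rg$.

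Next, invoking the correspondence between Lie subalgebras and connected integral subgroups, the subalgebra $\ad\rg$ integrates to a unique connected Lie subgroup $H\subseteq\Aut\rg$ with Lie algebra $\ad\rg$, and $H$ is generated by $\exp(\ad\rg)$. Since $\ad$ is linear, every element of $\ad\rg$ has the form $\ad u$, so $\exp(\ad\rg)=\{e^{\ad u}:u\in\rg\}$ is exactly the set of inner automorphisms; therefore $H$ coincides with the subgroup they generate, namely $\Int\rg$. This simultaneously shows that $\Int\rg$ is a Lie subgroup of $\Aut\rg$ and that its Lie algebra is $\ad\rg$.

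Finally, for normality I would argue directly on generators. For $\phi\in\Aut\rg$ and $u\in\rg$, the automorphism property $\phi[a,b]=[\phi a,\phi b]$ gives $\phi\circ\ad u\circ\phi^{-1}=\ad(\phi u)$, whence
$$\phi\,e^{\ad u}\,\phi^{-1}=e^{\phi\,(\ad u)\,\phi^{-1}}=e^{\ad(\phi u)}\in\Int\rg.$$
Thus conjugation by any $\phi$ permutes the generators of $\Int\rg$ and preserves $\Int\rg$, so $\Int\rg$ is normal in $\Aut\rg$; this is consistent with $\ad\rg$ being an ideal. The step I expect to be the most delicate is the identification in the previous paragraph: the integral subgroup attached to a subalgebra is in general only immersed and need not be closed in $\Aut\rg$, so one must be careful about the precise meaning of \emph{Lie subgroup} here and rely on the theorem that a connected Lie group is generated by the exponential image of its Lie algebra, rather than on any embeddedness.
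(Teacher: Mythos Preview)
Your argument is correct and follows the standard textbook route. Note, however, that the paper does not supply its own proof of this proposition: it is stated with citations to \cite{Mck-2005} and \cite{Hall-2004} and used as a known fact. So there is no in-paper argument to compare against; your write-up effectively fills in what the paper takes for granted.

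A couple of small remarks. Your computation $[D,\ad u]=\ad(Du)$ is the key identity and is exactly what is needed both for $\ad\rg$ to be an ideal of $\Der\rg$ and, at the group level, for the conjugation formula $\phi\circ\ad u\circ\phi^{-1}=\ad(\phi u)$; you have stated these cleanly. Your caveat about $\Int\rg$ being in general only an immersed (not necessarily closed) subgroup is well placed: the paper uses $\Int\rg$ only through its Lie algebra $\ad\rg$ and through the quotient $\Aut\rg/\Int\rg$, so this subtlety does not interfere with the later arguments, but it is worth flagging. Nothing in your proposal needs correction.
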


As the group $\Int \rg$ is a normal Lie subgroup of $\Aut\rg$, there is a quotient group denoted by $\Aut\rg/\Int\rg$. Denote by $q:\Aut\rg\rightarrow \Aut\rg/\Int\rg$ the corresponding quotient map. The Lie group structure of $\Aut\rg$ is induced from the Lie group structure of the group $GL(\rg)$ (See \cite{Hatcher-2005}), where $GL(\rg)$ is the group of all linear isomorphisms from $\rg$ to $\rg$. Due to \cite{Kelly-1955}, the quotient group $\Aut\rg/\Int\rg$ has the topology induced from $\Aut\rg$ by the quotient map $q$. It is well know that  the topology of $\Aut\rg/\Int\rg$ is not always discrete. We can add more open subsets on $\Aut\rg$ such that toplogy on $\Aut\rg/\Int \rg$ becomes the discrete topology. Let us denote by $\Aut\rg^{\delta}$ the space $\Aut\rg$ with a finer topology such that the topology of $\Aut\rg/\Int \rg$ becomes the discrete topology. In order to avoid confusion, we denote by ${\Aut\rg/\Int \rg}^{d}$ the space $\Aut\rg/\Int \rg$ with discrete topology.

Let M be a smooth manifold and $\varphi:M\rightarrow \Aut\rg$ be a smooth map. Let $q:\Aut\rg\rightarrow \Aut\rg/\Int\rg$ be the quotient map defined above.
\begin{theorem}\label{LocalCon}
The composition $q\circ\varphi:M\rightarrow \Aut\rg/\Int\rg$ is locally constant if and only if $\frac{\partial\varphi}{\partial X} {\varphi(x)}^{-1}\in \ad\rg$ for arbitrary $x\in M$ and $X\in T_{x}M$.
\end{theorem}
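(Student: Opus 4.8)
The plan is to read $\frac{\partial\varphi}{\partial X}\varphi(x)^{-1}$ as the right logarithmic derivative of $\varphi$: it is the image of the tangent vector $d\varphi_x(X)\in T_{\varphi(x)}\Aut\rg$ under right translation by $\varphi(x)^{-1}$, hence an element of $\Der\rg = T_e\Aut\rg$. With this reading the hypothesis says exactly that the logarithmic derivative of $\varphi$ is valued in the subspace $\ad\rg$, which by Proposition \ref{ProInt} is the Lie algebra of the connected normal subgroup $\Int\rg$. The theorem is therefore a dictionary between a first order condition on $\varphi$ and the geometric statement that $\varphi$ never leaves a coset of $\Int\rg$.

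Both implications localize to a one variable problem. Since the pointwise condition and local constancy are both tested on arbitrary tangent vectors $X = \gamma'(0)$, I would fix $x_0\in M$ together with a smooth curve $\gamma$ with $\gamma(0)=x_0$, put $c(t)=\varphi(\gamma(t))$, and study $k(t)=c(t)c(0)^{-1}$. Then $k(0)=e$ and a direct computation gives $k'(t)k(t)^{-1}=c'(t)c(t)^{-1}=\frac{\partial\varphi}{\partial\gamma'(t)}\varphi(\gamma(t))^{-1}$, so the hypothesis is equivalent to $k'(t)k(t)^{-1}\in\ad\rg$ for all $t$, i.e. to the linear differential equation $k'(t)=A(t)k(t)$ with $A(t)\in\ad\rg$ and $k(0)=e$.

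For the implication ($\Leftarrow$) I would integrate this equation. A solution $k$ of $k'(t)=A(t)k(t)$ with $A(t)$ constrained to the Lie algebra $\ad\rg$ of the connected subgroup $\Int\rg$ and with $k(0)=e$ stays in $\Int\rg$ for all $t$; this is the standard fact that such integral curves of the (bi invariant, by normality of $\Int\rg$) distribution $g\mapsto\ad\rg\cdot g$ remain in the leaf through the identity, which is $\Int\rg$. Consequently $c(t)=k(t)c(0)\in\Int\rg\,c(0)$, so $q(\varphi(\gamma(t)))=q(c(0))$ is independent of $t$; letting $\gamma$ sweep out a connected chart around $x_0$ shows that $q\circ\varphi$ is constant near $x_0$.

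For the implication ($\Rightarrow$), local constancy of $q\circ\varphi$ forces $\varphi(U)$ to lie in the single coset $\Int\rg\,\varphi(x_0)$ for $U$ a small neighborhood of $x_0$, so $k(t)=c(t)c(0)^{-1}$ is valued in $\Int\rg$ for small $t$. The step I expect to be the main obstacle is to deduce $k'(0)\in\ad\rg=T_e\Int\rg$ from the mere fact that the image of the smooth curve $k$ lies in $\Int\rg$: this is not automatic, because $\Int\rg$ is in general only an immersed, not an embedded, subgroup, and the text treats $\Aut\rg/\Int\rg$ purely as a topological space (its topology need not even be discrete), so no quotient manifold differential is available. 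I would settle it using that a connected Lie subgroup is an initial (weakly embedded) submanifold, so any smooth map into $\Aut\rg$ with image in $\Int\rg$ is automatically smooth into $\Int\rg$; then $k'(0)$ is tangent to $\Int\rg$ at $e$, that is $k'(0)=c'(0)c(0)^{-1}=\frac{\partial\varphi}{\partial X}\varphi(x_0)^{-1}\in\ad\rg$ for the arbitrary vector $X=\gamma'(0)$. As a unifying check, both directions amount to the single assertion that the cosets of $\Int\rg$ are the leaves of the integrable distribution $g\mapsto\ad\rg\cdot g$ (integrable since $\ad\rg$ is an ideal of $\Der\rg$, and well defined on both sides since each $g\in\Aut\rg$ sends $\ad u$ to $\ad(gu)$ under conjugation): the hypothesis says $\varphi$ is an integral map of this foliation, and the conclusion says $\varphi$ carries connected sets into single leaves.
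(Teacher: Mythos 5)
Your proof is correct, and it differs from the paper's in the direction that matters. For the implication ``locally constant $\Rightarrow$ logarithmic derivative in $\ad\rg$'' you and the paper do essentially the same thing: restrict to a curve, write $\varphi\circ\gamma$ as a constant times a curve in $\Int\rg$ (you via $k(t)=c(t)c(0)^{-1}$, the paper via $\varphi\circ\gamma=C\cdot\overline{\varphi}\circ\gamma$), and differentiate; the one genuine subtlety here --- that a curve which is smooth into $\Aut\rg$ and takes values in the merely immersed subgroup $\Int\rg$ has derivative tangent to $\Int\rg$ --- is passed over silently in the paper and is handled explicitly by you with the initial (weakly embedded) submanifold property of connected Lie subgroups. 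For the converse the routes are genuinely different. The paper pushes everything down to the quotient: it identifies $T_{[e]}(\Aut\rg/\Int\rg)$ with $\Der\rg/\ad\rg$, notes that the hypothesis forces $T_{*}q\circ T_{*}R_{\varphi(x)^{-1}}\circ T_{*}\varphi(X)=0$, uses the equivariance $q\circ R_{\varphi(x)^{-1}}\circ\varphi=R_{(q\circ\varphi(x))^{-1}}\circ(q\circ\varphi)$ to kill $T_{*}q\circ T_{*}\varphi$, and concludes local constancy from the vanishing of a differential. That is shorter, but it presupposes that $\Aut\rg/\Int\rg$ carries a smooth structure in which $q$ is a submersion with the stated tangent space --- delicate precisely because $\Int\rg$ need not be closed in $\Aut\rg$ (elsewhere the paper treats this quotient only as a topological space). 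You instead stay entirely inside $\Aut\rg$: you integrate $k'(t)=A(t)k(t)$ with $A(t)\in\ad\rg$ and $k(0)=e$, and use that integral curves of the right-invariant distribution spanned by $\ad\rg$ remain in the leaf through the identity, which is $\Int\rg$; then normality lets you pass from $c(t)\in\Int\rg\, c(0)$ to constancy of $q\circ\varphi\circ\gamma$, and sweeping curves through a connected chart gives local constancy. The cost is invoking the Frobenius-type leaf lemma; the gain is that your argument needs only Proposition \ref{ProInt} (that $\Int\rg$ is a connected normal Lie subgroup with Lie algebra $\ad\rg$) and is immune to the pathology of a non-closed $\Int\rg$, so it is in fact more robust than the paper's own proof.
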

\begin{proof}
Fix $x\in M$ and $X\in T_{x}M$. Let $\gamma:(-\varepsilon,\varepsilon)\rightarrow M$ be a curve such that $\gamma(0)=x$ and $\dot{\gamma}(0)=X$. Since $q\circ \varphi:M\rightarrow \Aut\rg/\Int\rg$ is locally constant, the map $\varphi\circ\gamma :(-\varepsilon,\varepsilon)\rightarrow \Aut\rg$ has the form
$$
\varphi\circ\gamma(t)=C\cdot\overline{\varphi}(\gamma(t)),
$$
where $C\in\Aut\rg$ is constant and $\overline{\varphi}\circ\gamma:(-\varepsilon,\varepsilon)\rightarrow \Int\rg$.
Then by Proposition \ref{ProInt}, one has
$$
\begin{array}{lll}
\frac{d\varphi\circ\gamma}{dt}|_{t=0}{\varphi(\gamma(0))}^{-1}=C\cdot\frac{d\overline{\varphi}\circ\gamma}{dt}|_{t=0}\cdot{\overline{\varphi}(\gamma(0))}^{-1}\cdot C^{-1}\in ad\rg.
\end{array}
$$
Thus $\frac{\partial\varphi}{\partial X} {\varphi(x)}^{-1}\in ad\rg$ .
\\\\
Consider the composition
$$
\xymatrix{
T_{x}M\ar[r]^{T_{*}\varphi} & T_{\varphi(x)}(\Aut\rg)\ar[r]^{T_{*}R_{\varphi(x)^{-1}}} &
T_{e}(\Aut\rg)\ar[r]^{T_{*}q}&T_{[e]}(\Aut\rg/\Int\rg)
}
$$
where $x\in M$ and $R_{\varphi(x)^{-1}}:\Aut\rg\rightarrow \Aut\rg,R_{\varphi(x)^{-1}}(\theta)=\varphi(x)^{-1}\theta$, for $\theta\in\Aut\rg$.

 We identify  $T_{e}(\Aut\rg)$ with $\Der\rg$ and $T_{[e]}(\Aut\rg/\Int\rg)$ with $\Der\rg/ad\rg$.
 For arbitrary $X\in T_{x}M$,
  $$\frac{\partial\varphi}{\partial X} {\varphi(x)}^{-1}\in ad\rg$$ is equivalent to $$T_{*}R_{\varphi(x)^{-1}}\circ T_{*}\varphi (X)\in ad\rg.$$
Then $T_{*}q\circ T_{*}R_{\varphi(x)^{-1}}\circ T_{*}\varphi(X)=0$.

Since $q\circ R_{\varphi(x)^{-1}}\circ  \varphi=R_{(q\circ \varphi(x))^{-1}}\circ (q\circ \varphi)$, it follows that  $T_{*}q\circ T_{*}\varphi(X)\equiv0$.
Consequently, $q\circ \varphi:M\rightarrow \Aut\rg/\Int\rg$ is locally constant.
\end{proof}

\begin{remark}In the theorem above, the condition that $q\circ \varphi: M\rightarrow \Aut\rg/\Int\rg$ is locally constant is equivalent to the condition that $\varphi:M\rightarrow {\Aut\rg}^{\delta}$ is continuous.
\end{remark}

It is well know that given a vector bundle endowed with a connection,  parallel transport along paths can be defined.
\begin{definition}(See \cite{Ballmann} Definition 3.1.1 and Lemma 3.1.3)\label{Parallelmap}
Let $E$ be a vector bundle on a smooth manifold $M$ and $\nabla$  be a connection on $E$. Let $\gamma:[0,1]\rightarrow M$ be a smooth path. Then for each $e\in E_{\gamma(0)}$, there exists a unique section $\sigma$ along $\gamma$ which satisfies $\nabla_{\dot{\gamma}}\sigma\equiv 0$ and $\sigma(0)=e$. Define $P_{\gamma}(e)=\sigma(1)$. Then $P_{\gamma}:E_{\gamma(0)}\rightarrow E_{\gamma(1)}$ is a well defined linear map called the \emph{parallel transport map}.
\end{definition}

\begin{lemma}(See \cite{Ballmann}, Lemma 3.1.8)\label{indep}
The parallel translation along path $\gamma$ does not depend on the parameterization of $\gamma$.
\end{lemma}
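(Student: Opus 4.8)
The plan is to reduce the assertion to the uniqueness clause already contained in Definition \ref{Parallelmap}. By a reparameterization of $\gamma$ I mean a smooth, monotone-increasing surjection $\phi:[0,1]\rightarrow[0,1]$ with $\phi(0)=0$ and $\phi(1)=1$; put $\tilde{\gamma}=\gamma\circ\phi$, so that $\tilde{\gamma}$ has the same image and the same endpoints as $\gamma$. The goal is to prove $P_{\tilde{\gamma}}=P_{\gamma}$ as linear maps $E_{\gamma(0)}\rightarrow E_{\gamma(1)}$. Since both are linear it suffices to fix an arbitrary $e\in E_{\gamma(0)}$ and show $P_{\tilde{\gamma}}(e)=P_{\gamma}(e)$.

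First I would let $\sigma$ be the unique parallel section along $\gamma$ with $\sigma(0)=e$, whose existence and uniqueness is exactly Definition \ref{Parallelmap}. The natural candidate for the parallel section along $\tilde{\gamma}$ is $\tilde{\sigma}:=\sigma\circ\phi$, which is a section along $\tilde{\gamma}$ satisfying $\tilde{\sigma}(0)=\sigma(\phi(0))=\sigma(0)=e$. If I can verify that $\tilde{\sigma}$ is itself parallel along $\tilde{\gamma}$, i.e.\ $\nabla_{\dot{\tilde{\gamma}}}\tilde{\sigma}\equiv 0$, then the uniqueness clause of Definition \ref{Parallelmap} forces $\tilde{\sigma}$ to be \emph{the} parallel-transport section for $\tilde{\gamma}$ with initial value $e$, and hence $P_{\tilde{\gamma}}(e)=\tilde{\sigma}(1)=\sigma(\phi(1))=\sigma(1)=P_{\gamma}(e)$, as desired.

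The computation of $\nabla_{\dot{\tilde{\gamma}}}\tilde{\sigma}$ is the heart of the matter, and it amounts to a chain-rule identity for the covariant derivative induced along a curve. Working in a local frame $\{s_a\}$ of $E$ with connection coefficients $\Gamma^a_{bc}$, a section $\sigma(s)=\sigma^a(s)\,s_a$ along $\gamma$ satisfies $\nabla_{\dot{\gamma}}\sigma=\bigl(\tfrac{d\sigma^a}{ds}+\Gamma^a_{bc}\,\dot{\gamma}^b\,\sigma^c\bigr)s_a$. Because $\dot{\tilde{\gamma}}(t)=\phi'(t)\,\dot{\gamma}(\phi(t))$ and $\tfrac{d\tilde{\sigma}^a}{dt}(t)=\phi'(t)\,\tfrac{d\sigma^a}{ds}(\phi(t))$, both terms acquire the common factor $\phi'(t)$, so that $\nabla_{\dot{\tilde{\gamma}}}\tilde{\sigma}(t)=\phi'(t)\,\bigl(\nabla_{\dot{\gamma}}\sigma\bigr)(\phi(t))$. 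Since $\sigma$ is parallel along $\gamma$, the right-hand side vanishes identically, which gives $\nabla_{\dot{\tilde{\gamma}}}\tilde{\sigma}\equiv 0$ and completes the argument.

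I expect the only delicate point to be the chain-rule identity $\nabla_{\dot{\tilde{\gamma}}}(\sigma\circ\phi)=\phi'\cdot(\nabla_{\dot{\gamma}}\sigma)\circ\phi$: one must check that it is independent of the chosen local frame, which follows from the transformation law of the $\Gamma^a_{bc}$, and that it holds uniformly across the finitely many charts needed to cover the compact image $\gamma([0,1])$, the pieces being patched together by uniqueness of solutions of the defining linear ordinary differential equation. Everything else is routine bookkeeping, and since $e$ was arbitrary and the maps are linear we conclude $P_{\tilde{\gamma}}=P_{\gamma}$.
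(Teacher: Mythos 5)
Your proof is correct. Note that the paper offers no proof of this lemma at all---it is quoted directly from \cite{Ballmann}, Lemma 3.1.8---and your argument, pulling back the parallel section by the reparameterization, verifying parallelism via the chain-rule identity $\nabla_{\dot{\tilde{\gamma}}}(\sigma\circ\phi)=\phi'\cdot\bigl(\nabla_{\dot{\gamma}}\sigma\bigr)\circ\phi$, and then invoking the uniqueness clause of Definition \ref{Parallelmap}, is precisely the standard proof used in such references, so there is nothing to add beyond the minor remark that frame-independence needs no separate check (the covariant derivative along a curve is already a well-defined, frame-independent object).
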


More generally, Definition \ref{Parallelmap} and Lemma \ref{indep}  extend to the piecewise smooth situation (See \cite{Ballmann}, page 22). Let $\gamma,\gamma':[0,1]\rightarrow M$ be two piecewise paths with $\gamma(1)=\gamma'(0)$. Define the  inverse path $\gamma^{-1}$ and composition $\gamma'\gamma$ by

\begin{displaymath}
\gamma^{-1}(t)=\gamma(1-t)~~~0 \leq t\leq 1~~~~;~~~~\gamma'\gamma = \left\{\begin{array}{ll}
\gamma(2t)& \textrm{$0\leq t\leq \frac{1}{2}$},\\
\gamma'(2t-1) &\textrm{$ \frac{1}{2}\leq t\leq 1$}.
\end{array}\right.
\end{displaymath}

Since the composition of smooth paths is  piecewise smooth, we have
\begin{lemma}(See \cite{Ballmann}, 3.1.18)\label{compositionpath}
Given two piecewise smooth paths $\gamma,\gamma':[0,1]\rightarrow M$ with $\gamma(1)=\gamma'(0)$. Then
$$
P_{\gamma^{-1}}={P_{\gamma}}^{-1}\quad  and \quad P_{\gamma'\gamma}=P_{\gamma'}\circ P_{\gamma}.
$$
\end{lemma}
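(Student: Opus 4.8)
The plan is to reduce both identities to the uniqueness clause of Definition \ref{Parallelmap} (in its piecewise smooth extension) together with the observation that the defining equation $\nabla_{\dot{\gamma}}\sigma\equiv 0$ transforms predictably under the two operations in question. Throughout, for $e$ in the relevant fibre I will write $\sigma$ for the unique parallel section along the path under consideration with prescribed initial value $\sigma(0)=e$, so that $P_{\gamma}(e)=\sigma(1)$.

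For the inverse identity $P_{\gamma^{-1}}={P_{\gamma}}^{-1}$, I would begin with a parallel section $\sigma$ along $\gamma$ and set $\tau(t):=\sigma(1-t)$. The key point is that $\tau$ is parallel along $\gamma^{-1}$: since $\dot{\gamma^{-1}}(t)=-\dot{\gamma}(1-t)$ and the covariant derivative is linear in the velocity, the reflection $t\mapsto 1-t$ reverses the sign of both the velocity and the parameter derivative, leaving $\nabla_{\dot{\gamma^{-1}}}\tau\equiv 0$ intact. As $\tau(0)=\sigma(1)$ and $\tau(1)=\sigma(0)$, this gives $P_{\gamma^{-1}}(P_{\gamma}(e))=e$ for every $e\in E_{\gamma(0)}$; exchanging the roles of $\gamma$ and $\gamma^{-1}$ yields the opposite composition, so the two maps are mutually inverse linear isomorphisms.

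For the composition identity, I would fix $e\in E_{\gamma(0)}$, take $\sigma$ parallel along $\gamma$ with $\sigma(0)=e$, and take $\sigma'$ parallel along $\gamma'$ with initial value $\sigma'(0)=\sigma(1)=P_{\gamma}(e)$, a choice that is legitimate precisely because $\gamma(1)=\gamma'(0)$. I then glue these into a single section $\tilde{\sigma}$ along $\gamma'\gamma$ by setting $\tilde{\sigma}(t)=\sigma(2t)$ on $[0,\tfrac12]$ and $\tilde{\sigma}(t)=\sigma'(2t-1)$ on $[\tfrac12,1]$. By Lemma \ref{indep} each half is parallel along the corresponding affinely reparameterized piece of $\gamma'\gamma$, and the two halves agree at $t=\tfrac12$ since both equal $\sigma(1)=\sigma'(0)$. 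Hence $\tilde{\sigma}$ is a well-defined piecewise smooth parallel section with $\tilde{\sigma}(0)=e$, and uniqueness forces it to coincide with the section defining $P_{\gamma'\gamma}$, so that $P_{\gamma'\gamma}(e)=\tilde{\sigma}(1)=\sigma'(1)=P_{\gamma'}(P_{\gamma}(e))$.

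I do not expect a serious obstacle, only two bookkeeping points requiring care. The first is confirming that the reflection $t\mapsto 1-t$ truly preserves the parallel equation; here I must argue directly from the linearity of $\nabla_{\dot{\gamma}}$ in $\dot{\gamma}$ rather than invoke Lemma \ref{indep}, since that lemma concerns orientation-preserving reparameterizations and does not cover orientation reversal. The second is verifying continuity of $\tilde{\sigma}$ at the junction $t=\tfrac12$, which is exactly why the initial value of $\sigma'$ was chosen to be $\sigma(1)$; once continuity is secured, the uniqueness statement of the piecewise smooth parallel transport does the remaining work.
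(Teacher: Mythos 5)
Your proof is correct. The paper gives no argument for this lemma at all---it is stated as a citation to Ballmann---so there is no in-paper proof to diverge from; your two arguments (sign cancellation under $t\mapsto 1-t$ using linearity of the connection in the velocity for the inverse identity, and gluing the two parallel sections plus uniqueness of the parallel section with prescribed initial value for the composition identity) are exactly the standard ones, and you are right to note that the orientation-reversal step cannot be delegated to Lemma \ref{indep} and must instead be checked directly from the parallel-transport equation.
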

\begin{definition}(See \cite{Ballmann}, page 23)
Let $M$ be a smooth map. Consider a continuous map
$$
H:[0,1]\times [0,1]\rightarrow M, \quad \quad \quad h_{s}=H(s,\cdot):[0,1]\rightarrow M
$$
where $h_{s}$ is a family of piecewise smooth maps, and $H(s,t)$ is smooth in $s$. The map $H$ is called  a \emph{piecewise smooth homotopy.}
\end{definition}

Consider a vector bundle $E\rightarrow M$ endowed with a connection $\nabla$ with curvature $R$. Through piecewise smooth homotopy,  we can give some important relation between parallel transport and curvature. We denoted by $P_{s,t}$ the parallel transport from $E_{h_{s}(t)}$ to $E_{h_{s}(1)}$ through path $h_{s}$ and set
$$
R_{s,t }=P_{s,t}\circ R(\partial_{t}H(s,t),\partial_{s}H(s,t))\circ P_{s,t}^{-1}:E_{h_{s}(1)}\rightarrow E_{h_{s}(1)}
$$

\begin{lemma}(See \cite{Ballmann}, Lemma 3.1.11)\label{baseofholonomy}
Let $\sigma$ be a piecewise smooth section along a piecewise smooth homotopy $H$, defined as above, such that $\nabla_{\partial _{t}}\sigma(s,t)\equiv 0$ and $\nabla_{\partial _{s}}\sigma(s,0)\equiv 0$ for all $s$. Then
$$
\nabla_{\partial_{s}}\sigma(s,1)=(\int_{0}^{1}R_{s,t}dt)\sigma(s,1).
$$

In the case $H(s,0)$ and $H(s,1)$ are constant, then $P_{s,0}=P_{h_{s}}$ and satisfies
$$
\partial_{s}P_{s,0}=(\int_{0}^{1}R_{s,t}dt)\cdot P_{s,0}.
$$
\end{lemma}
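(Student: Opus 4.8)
The plan is to reduce everything to the commutation relation between the mixed covariant derivatives of $\sigma$ along $H$, and then to convert a covariant derivative into an ordinary $t$-derivative of a parallel-transported quantity so that the fundamental theorem of calculus can be applied. First I would introduce the auxiliary section $\tau(s,t)=\nabla_{\partial_s}\sigma(s,t)$ along $H$ and exploit that $[\partial_s,\partial_t]=0$ on $[0,1]\times[0,1]$. The defining identity for the curvature of the pulled-back connection gives
$$
(\nabla_{\partial_s}\nabla_{\partial_t}-\nabla_{\partial_t}\nabla_{\partial_s})\sigma = R(\partial_s H(s,t),\partial_t H(s,t))\sigma .
$$
Since $\nabla_{\partial_t}\sigma\equiv 0$ by hypothesis, the first term vanishes and, using the antisymmetry of $R$, this collapses to
$$
\nabla_{\partial_t}\tau(s,t)=R(\partial_t H(s,t),\partial_s H(s,t))\,\sigma(s,t),
$$
which is exactly the curvature factor appearing in the definition of $R_{s,t}$.

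Next I would establish the transport identity
$$
\frac{d}{dt}\bigl(P_{s,t}\,\tau(s,t)\bigr)=P_{s,t}\,\nabla_{\partial_t}\tau(s,t),
$$
valid on each interval of smoothness of $h_{s}$. This is proved by choosing a $t$-parallel frame $e_{1}(t),\dots,e_{n}(t)$ along $h_{s}$, writing $\tau=\sum_{i}a_{i}(t)e_{i}(t)$, and observing that $P_{s,t}e_{i}(t)=e_{i}(1)$ while $\nabla_{\partial_t}\tau=\sum_{i}a_{i}'(t)e_{i}(t)$; differentiating $P_{s,t}\tau=\sum_{i}a_{i}(t)e_{i}(1)$ in the fixed fibre $E_{h_{s}(1)}$ gives the claim. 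Integrating over $t\in[0,1]$ and noting that $P_{s,t}\tau$ is continuous across the finitely many corners of $h_{s}$, the intermediate boundary terms telescope and I obtain
$$
P_{s,1}\tau(s,1)-P_{s,0}\tau(s,0)=\int_{0}^{1}P_{s,t}\,\nabla_{\partial_t}\tau(s,t)\,dt .
$$
Here $P_{s,1}$ is the identity and $\tau(s,0)=\nabla_{\partial_s}\sigma(s,0)=0$ by hypothesis, so the left side is just $\nabla_{\partial_s}\sigma(s,1)$.

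Then I would substitute the curvature formula for $\nabla_{\partial_t}\tau$ from the first step together with the fact that $\sigma$ is $t$-parallel, i.e. $\sigma(s,t)=P_{s,t}^{-1}\sigma(s,1)$, which produces
$$
\nabla_{\partial_s}\sigma(s,1)=\int_{0}^{1}P_{s,t}\,R(\partial_t H,\partial_s H)\,P_{s,t}^{-1}\,\sigma(s,1)\,dt=\Bigl(\int_{0}^{1}R_{s,t}\,dt\Bigr)\sigma(s,1),
$$
the first assertion. For the second assertion, when $H(s,0)$ and $H(s,1)$ are constant the fibres $E_{h_{s}(0)}$ and $E_{h_{s}(1)}$ do not depend on $s$, so $P_{s,0}=P_{h_{s}}$ is the full parallel transport and is a curve of linear maps between two fixed fibres. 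Fixing $e\in E_{h_{s}(0)}$, I take $\sigma(s,0)\equiv e$ (so that $\nabla_{\partial_s}\sigma(s,0)=0$ holds automatically) and extend $\sigma$ to be $t$-parallel, whence $\sigma(s,1)=P_{s,0}e$. Since $s\mapsto h_{s}(1)$ is a constant curve, its velocity vanishes and $\nabla_{\partial_s}\sigma(s,1)=\partial_s\sigma(s,1)=(\partial_s P_{s,0})e$; comparing with the first formula applied to this $\sigma$ and letting $e$ range over the fibre yields $\partial_s P_{s,0}=(\int_{0}^{1}R_{s,t}\,dt)P_{s,0}$.

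The step I expect to be the main obstacle is the careful handling of the piecewise-smooth regularity: the transport identity and the application of the fundamental theorem of calculus must be carried out on each subinterval where $h_{s}$ is smooth and then glued, which relies on the continuity of $P_{s,t}\tau(s,t)$ at the corner points (guaranteed by the continuity of parallel transport together with Lemma \ref{compositionpath}) so that the intermediate boundary contributions cancel. One must also justify interchanging the $s$-derivative with the $t$-integral and verify that $\nabla_{\partial_s}$ genuinely reduces to the ordinary $\partial_s$ at the fixed endpoints, which is precisely where the constancy of $H(s,0)$ and $H(s,1)$ enters.
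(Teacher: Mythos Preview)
The paper does not supply its own proof of this lemma; it is quoted verbatim from Ballmann's notes (Lemma~3.1.11) and used as a black box. Your argument is correct and is precisely the standard proof one finds in that reference: compute $\nabla_{\partial_t}(\nabla_{\partial_s}\sigma)$ via the curvature identity, convert the covariant $t$-derivative into an ordinary derivative by conjugating with $P_{s,t}$, and integrate. The deduction of the second statement from the first by choosing $\sigma(s,0)\equiv e$ and observing that $\nabla_{\partial_s}$ reduces to $\partial_s$ at the constant endpoint is also the expected one.

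One small remark: your closing caveat about ``interchanging the $s$-derivative with the $t$-integral'' is not actually needed here, since the argument only integrates in $t$ for each fixed $s$; no interchange of limits occurs. The genuine regularity issue is exactly the one you already identified, namely gluing the fundamental theorem of calculus across the corners of $h_s$, and you handle that correctly.
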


\def\e{\emph}
\section{The Existence of Coupling}

\begin{definition}(See \cite{Mck-2005},Definition 3.1.1) A \e{Lie algebroid} $A$ over a smooth manifold $M$ is a vector bundle $p:A\rightarrow M$ together with a Lie algebra structure $[\ ,\ ]$ on the space  $\Gamma(A;M)$ of sections and  a bundle map $a:A\rightarrow TM $ called the
\e{anchor}  such that
\begin{enumerate}\renewcommand{\labelenumi}{$($\roman{enumi}$)$}
\item the induced map $a:\Gamma(A;M)\rightarrow \Gamma(TM;M)$ is a Lie algebra homomorphism
\item for any sections $\sigma, \tau\in\Gamma(A;M)$ and smooth function $f\in C^{\infty}(M)$ we have the Leibniz identity $$[\sigma,f\tau]=f[\sigma,\tau]+a(\sigma)(f)\tau.$$
\end{enumerate}

\end{definition}

We call $A$ a \emph{transitive Lie algebroid} if $a$ is surjective. We often use
$$
 0\xrightarrow{} L \xrightarrow{j} A \xrightarrow{a}TM \xrightarrow{} 0
$$
 to denote a transitive Lie algebroid. Here $L=\kernel a$ is called the adjoint bundle. Sometimes we use $(A,M,[\ ,\ ],a)$ to denote a Lie algebroid in order to highlight the bracket.

 Let $(A',M,[\ ,\ ],a')$ be a second Lie algebroid on the same base $M$. Then a morphism of Lie algebroids $g:A\rightarrow A'$ over $M$, or a Lie algebroid homorphism, is a vector bundle morphism such that $a'\circ g=a$ and $f([\sigma,\tau])=[f(\sigma),f(\tau)]$, for all $\sigma,\tau\in\Gamma(A;M)$.

\begin{notation}
Given a Lie algebra bundle $L$, the transitive Lie algebroid of covariant derivations on $\Gamma(L;M)$ (See \cite{Mck-2005}, Corollary 3.6.11) is denoted by
$\D_{Der}(L)$. The bundle $\D_{Der}(L)$ is included in the Atiah exact sequence
$$
0\rightarrow \Der(L)\rightarrow  \D_{Der}(L)\rightarrow TM\rightarrow 0
$$
where $\Der(L)$ is the bundle of fiberwise derivative of $L$. The elements of $\Gamma(\D_{Der}(L);M)$ are called bracket derivations on $L$.

We denote  by $\ad L$ the image of the adjoint representation $ad:L\rightarrow \D_{Der}(L)$. One can prove that $\ad L$ is an ideal of $\D_{Der}(L)$(See \cite{Mck-2005}), page 271).
\end{notation}
\begin{definition}(See \cite{Mck-2005}, Definition 7.2.1)\label{outerderivation}
Let $L$ be a Lie algebra bundle on a  smooth manifold $M$. Then the quotient Lie algebroid
$$
0\rightarrow \Der(L)/adL\rightarrow  \D_{Der}(L)/adL\rightarrow TM\rightarrow 0,
$$
is denoted by
$$
0\rightarrow \Out_{Der}(L)\rightarrow  \Out\D_{Der}(L)\rightarrow TM\rightarrow 0
$$
and elements of $\Gamma(\Out\D_{Der}L;M)$ are called outer bracket derivations on $L$. Two quotient maps are denoted by $\natural^{+}:\Der(L)\rightarrow \Out_{Der}(L)$ and $\natural:\D_{Der}(L)\rightarrow \Out\D_{Der}(L)$.

\end{definition}

Since the tangent bundle of a smooth manifold is a Lie algebroid (\cite{Mck-2005}, Section 3.3), we have following definition.
\begin{definition}(See \cite{Mck-2005}, Definition 7.2.2)\label{akinofcoupl}
Let $L$ be a Lie algebra bundle on a smooth manifold $M$. A \e{coupling} of $TM$ with $L$ is a Lie algebroid homomorphism $\Xi:TM\rightarrow \Out\D_{Der}(L)$. We also say that $TM$  and $L$ are \e{coupled by $\Xi$}.
\end{definition}

\begin{remark}
The definition of coupling in \cite{Mck-2005} is more general, as Mackenzie defined coupling between an arbitrary Lie algebroid and Lie algebra bundle on the same base. Here we only consider the coupling between a tangent bundle and a Lie algebra bundle. Nevertheless, it is still worth considering the special  coupling in Definition \ref{akinofcoupl},  since it is useful in describing the classifying space of a  transitive Lie algebroid in \cite{Mi-2011}.
\end{remark}

Given a Lie algebra bundle, the coupling need not always exist. An example follows.
\begin{example}
Let $\mathbf{S}^{2}$ be two dimensional sphere. The tangent bundle of $\mathbf{S}^{2}$ is denoted by $T\mathbf{S}^{2}\rightarrow \mathbf{S}^{2}$ and is not trivial(See \cite{Hatcher-2005}, Page 5). Then there is no coupling $\Xi:T\mathbf{S}^{2}\rightarrow \Out\D_{Der}(T\mathbf{S}^{2})$.

To see this, we first note that $\ad (T\mathbf{S}^{2})=0$ since $T\mathbf{S}^{2}$ is an abelian Lie algebra bundle. Then $\Out\D_{Der}(T\mathbf{S}^{2})\equiv\D_{Der}(T\mathbf{S}^{2})$. Thus a coupling $\Xi$ is also a Lie algebroid homomorphism  $\Xi:T\mathbf{S}^{2}\rightarrow \D_{Der}(T\mathbf{S}^{2})$. It can be regard as  a flat connection on $T\mathbf{S}^{2}$(See \cite{Mck-2005}, Page 186).  But $T\mathbf{S}^{2}$ admits no flat connection, since $\mathbf{S}^{2}$ is simply connected, a flat connection would trivialize the bundle.
\end{example}

 In the remainder, we will be concerned with the existence of a coupling.

\begin{lemma}(See \cite{Mck-2005}, Section 7.2)\label{Lieconnection}
Let $L$ be a Lie algebra bundle on a smooth manifold $M$ with coupling $\Xi:TM\rightarrow \Out\D_{Der}(L)$. There is a connection $\nabla$ on $L$ such that
\begin{description}
  \item[i.] $\nabla_{X}[u,v]=[\nabla_{X}u,v]+[u,\nabla_{X}v]$;
  \item[ii.] $R^{\nabla}(X,Y)=\nabla_{[X,Y]}-\{\nabla_{X},\nabla_{Y}\}=ad\circ\Omega(X,Y).$
\end{description}
Here $u,v\in\Gamma(L;M)$, $X,Y\in\Gamma(TM;M)$ and $\Omega:TM\bigwedge TM\rightarrow L$ is a vector bundle morphism.
The connection $\nabla$ satisfying   property ($\mathbf{i}$) is called a \e{Lie connection}.
\end{lemma}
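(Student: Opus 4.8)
The plan is to realize the desired connection as a vector bundle splitting of the anchor of $\D_{Der}(L)$ that lifts the coupling $\Xi$, and then to read off the curvature identity from the fact that $\Xi$ is a Lie algebroid morphism. Recall that a Lie connection on $L$ is the same datum as a vector bundle morphism $\nabla:TM\rightarrow \D_{Der}(L)$ splitting the anchor $\D_{Der}(L)\rightarrow TM$: because the image lies in $\D_{Der}(L)$ rather than in the larger algebroid of all covariant differential operators, each $\nabla_{X}$ is automatically a fiberwise derivation of the bracket, which is exactly property (\textbf{i}). So the real content is to produce such a splitting compatible with $\Xi$ and to control its curvature.

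First I would construct $\nabla$. The coupling $\Xi$ is a section of the anchor of $\Out\D_{Der}(L)\rightarrow TM$. Since $\natural:\D_{Der}(L)\rightarrow \Out\D_{Der}(L)$ is a surjective morphism of vector bundles over $M$ with kernel $\ad L$, and $\ad L$ is a subbundle (as used already in Definition \ref{outerderivation}), $\natural$ admits a vector bundle splitting $s$, e.g. by choosing a complement of $\ad L$ via a fiber metric. I then set $\nabla=s\circ\Xi$. Chasing the commutative diagram relating the two Atiyah sequences shows at once that $\natural\circ\nabla=\Xi$ and that the anchor of $\nabla(X)$ is $X$, so that $\nabla$ is a genuine Lie connection satisfying (\textbf{i}).

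Next I would compute the curvature $R^{\nabla}(X,Y)=\nabla_{[X,Y]}-\{\nabla_{X},\nabla_{Y}\}$, where $\{\nabla_{X},\nabla_{Y}\}$ is the operator commutator, that is the Lie bracket of $\nabla_{X},\nabla_{Y}$ in $\D_{Der}(L)$. Applying the anchor, which is a Lie algebra homomorphism, gives $[X,Y]-[X,Y]=0$, so $R^{\nabla}(X,Y)$ is in fact a section of the fiberwise part $\Der(L)$. Applying instead the bracket-preserving map $\natural$ and using $\natural\circ\nabla=\Xi$ yields $\natural(R^{\nabla}(X,Y))=\Xi([X,Y])-[\Xi(X),\Xi(Y)]$, which vanishes precisely because $\Xi$ is a Lie algebroid morphism. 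Hence $R^{\nabla}(X,Y)\in\kernel\natural^{+}=\ad L$, so $R^{\nabla}(X,Y)=\ad(\Omega(X,Y))$ for some $\Omega(X,Y)\in\Gamma(L)$, and the antisymmetry and tensoriality of $R^{\nabla}$ in $X,Y$ are inherited from the curvature, giving property (\textbf{ii}).

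The step I expect to be the main obstacle is the last one: extracting a genuine smooth vector bundle morphism $\Omega:TM\wedge TM\rightarrow L$ from the pointwise membership $R^{\nabla}(X,Y)\in\ad L$. Since $\ad:L\rightarrow\Der(L)$ may have nontrivial kernel, namely the fiberwise centre $Z(L)$, the element $\Omega(X,Y)$ is only determined modulo the centre, so smoothness must be arranged by fixing a vector bundle splitting of $\ad:L\rightarrow \ad L$ (equivalently, identifying $\ad L$ with $L/Z(L)$ and lifting). I would verify that such a splitting exists from the local triviality of $L$ and that the resulting $\Omega$ is smooth and alternating; the checks of (\textbf{i}) and of $\natural\circ\nabla=\Xi$ are then routine diagram chases.
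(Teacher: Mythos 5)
Your proof is correct. A preliminary remark: the paper itself gives no proof of this lemma --- it is quoted from \cite{Mck-2005}, Section 7.2 --- so the only meaningful comparison is with the argument in that cited source, which your proposal essentially reconstructs. The one place you deviate from Mackenzie's presentation is the construction of $\nabla$: you split the quotient map $\natural:\D_{Der}(L)\rightarrow\Out\D_{Der}(L)$ directly as a surjective vector bundle morphism (legitimate, since its kernel $\ad L\cong L/Z(L)$ is a subbundle) and set $\nabla=s\circ\Xi$, whereas Mackenzie starts from an arbitrary Lie connection $\nabla^{0}$ on $L$ (which exists because $L$ is a Lie algebra bundle), notes that the difference between $\Xi$ and $\natural\circ\nabla^{0}$ takes values in $\Out_{Der}(L)$, and corrects $\nabla^{0}$ by lifting that difference through $\natural^{+}$. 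The two devices are interchangeable --- both come down to the fact that a surjective vector bundle morphism whose kernel is a subbundle admits a smooth right inverse --- and yours is, if anything, slightly more economical. You also handle correctly the two genuinely delicate points: that $R^{\nabla}$ lands in $\ad L$ (killed by the anchor because both terms project to $[X,Y]$, and killed by $\natural$ because $\Xi$ preserves brackets), and that extracting a smooth $\Omega$ with $\ad\circ\Omega=R^{\nabla}$ requires choosing a splitting of $\ad:L\rightarrow\ad L$, since $\Omega$ is only determined modulo the centre subbundle $Z(L)$; the existence of that splitting from local triviality of $L$ is exactly the right justification.
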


\begin{lemma}\label{Lieparalled}
Let $\gamma:[0,1]\rightarrow M$ be a piecewise smooth path on $M$ and $L\rightarrow M$  a Lie algebra bundle with Lie connection $\nabla$.  Then the parallel transport map $P_{\gamma}:L_{\gamma(0)}\rightarrow L_{\gamma(1)}$, as defined in Definition\ref{Parallelmap}, is a Lie algebra isomorphism.
\end{lemma}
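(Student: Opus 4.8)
The plan is to establish two things: that $P_\gamma$ is a linear isomorphism, and that it respects the fiberwise Lie bracket of $L$; together these make it a Lie algebra isomorphism. The linearity and bijectivity are immediate from the general theory already recorded: by Definition~\ref{Parallelmap} the map $P_\gamma$ is a well-defined linear map, and by Lemma~\ref{compositionpath} it has inverse $P_{\gamma^{-1}}$. So the entire problem reduces to verifying the single identity $P_\gamma([a,b]) = [P_\gamma(a), P_\gamma(b)]$ for all $a,b \in L_{\gamma(0)}$.

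The crux is a uniqueness argument for parallel sections. Given $a,b \in L_{\gamma(0)}$, let $u$ and $v$ be the unique parallel sections along $\gamma$ with $u(0)=a$, $v(0)=b$, so that $\nabla_{\dot{\gamma}}u \equiv 0$, $\nabla_{\dot{\gamma}}v \equiv 0$, and $u(1)=P_\gamma(a)$, $v(1)=P_\gamma(b)$. I would then consider the section $t \mapsto [u(t),v(t)]$ along $\gamma$, formed from the fiberwise bracket of the Lie algebra bundle, whose value at $t=0$ is $[a,b]$. Applying the Lie connection property~(i) of Lemma~\ref{Lieconnection} gives $\nabla_{\dot{\gamma}}[u,v] = [\nabla_{\dot{\gamma}}u, v] + [u, \nabla_{\dot{\gamma}}v] = 0$, so $[u,v]$ is itself a parallel section. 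By the uniqueness clause of Definition~\ref{Parallelmap}, this section must coincide with the parallel transport of $[a,b]$, whence its value at $t=1$ equals $P_\gamma([a,b])$. But that same value is $[u(1),v(1)] = [P_\gamma(a), P_\gamma(b)]$, which is exactly the identity sought.

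For the piecewise smooth case I would reduce to the smooth situation: a piecewise smooth path decomposes into smooth pieces, and by Lemma~\ref{compositionpath} the transport $P_\gamma$ is the composition of the transports along those pieces. Each piece-transport is a Lie algebra isomorphism by the argument above, and a composition of Lie algebra isomorphisms is again one, so $P_\gamma$ is a Lie algebra isomorphism.

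The step that I expect to require the most care is justifying that the Leibniz rule of the Lie connection, stated in Lemma~\ref{Lieconnection} for global sections $u,v \in \Gamma(L;M)$, continues to hold for the sections along $\gamma$ used above. This is a routine localization: restrict to a trivializing neighborhood and extend the values of $u$ and $v$ along $\gamma$ to local sections to which property~(i) directly applies. Apart from this bookkeeping, the whole argument is just the uniqueness-of-parallel-sections observation, which carries all the content.
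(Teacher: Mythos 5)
Your proposal is correct and follows essentially the same route as the paper's own proof: take the parallel sections $\sigma,\tau$ extending $a,b$, use property (i) of Lemma~\ref{Lieconnection} to show $[\sigma,\tau]$ is parallel, invoke uniqueness to conclude $P_\gamma([a,b])=[P_\gamma(a),P_\gamma(b)]$, and handle invertibility and the piecewise smooth case via Lemma~\ref{compositionpath}. Your closing remark about localizing the Leibniz rule to sections along $\gamma$ is a point of care that the paper's proof silently glosses over, so if anything your write-up is slightly more complete.
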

\begin{proof}

Without lose of generality, we suppose that $\gamma:[0,1]\rightarrow M$ is smooth path. Let $u,v\in L_{\gamma(0)}$. There are sections $\sigma,\tau$ along path $\gamma$ such that $\nabla_{\dot{\gamma}}\sigma=0$, $\nabla_{\dot{\gamma}}\tau=0$ and $\sigma(\gamma(0))=u$, $\tau(\gamma(0))=v$.

By Definition \ref{Parallelmap},  $P_{\gamma}(u)=\sigma(\gamma(1))$ and $P_{\gamma}(v)=\tau(\gamma(1))$. Since $\nabla$ is a Lie connection, $\nabla_{\dot{\gamma}}([\sigma,\tau])=[\nabla_{\dot{\gamma}}\sigma,\tau]+[\sigma,\nabla_{\dot{\gamma}}\tau]=0$. Thus $[\sigma,\tau]$ is also a parallel section along $\gamma$. As $[\sigma,\tau](\gamma(0))=[u,v],$
$$
\begin{array}{lll}
P_{\gamma}([u,v])=[\sigma,\tau](\gamma(1))\\\\
~~~~~~~~~~~~~=[\sigma(\gamma(1),\tau(\gamma(1))]\\\\
~~~~~~~~~~~~~=[P_{\gamma}(u),P_{\gamma}(v)].
\end{array}
$$
Due to the above formula and Lemma \ref{compositionpath},  $P_{\gamma}$ is a Lie algebra isomorphism.
\end{proof}

\begin{theorem}\label{necesuff}
Let $L$ be a Lie algebra bundle on asmooth manifold $M$ with fibre $\rg$. Then  a coupling $\Xi:TM\rightarrow \Out\D_{Der}(L)$ exists if and only if $L$ admits a locally trivial structure with structural group ${\Aut\rg}^{\delta}$.
\end{theorem}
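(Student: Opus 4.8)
The plan is to route both implications through a single intermediate object: a Lie connection $\nabla$ on $L$ whose curvature $R^{\nabla}$ takes values in $\ad L$. On one side, Lemma \ref{Lieconnection} shows a coupling produces such a $\nabla$ (with $R^{\nabla}=\ad\circ\Omega$), and I will recover a coupling from any such $\nabla$ by pushing it into $\Out\D_{Der}(L)$. On the other side, the transition functions of a trivialization built from $\nabla$ satisfy precisely the derivative condition of Theorem \ref{LocalCon}, which by the remark following that theorem is equivalent to continuity into ${\Aut\rg}^{\delta}$.

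\emph{Necessity.} Suppose a coupling $\Xi$ exists. Using Lemma \ref{Lieconnection} I would fix a Lie connection $\nabla$ with $R^{\nabla}=\ad\circ\Omega$, so $R^{\nabla}$ is valued in $\ad L$. Cover $M$ by chart domains $U_{\alpha}$ with base points $x_{\alpha}$, fix Lie algebra identifications $L_{x_{\alpha}}\cong\rg$, and define $\psi_{\alpha}\colon L|_{U_{\alpha}}\to U_{\alpha}\times\rg$ by parallel transport along the radial segments of the chart centred at $x_{\alpha}$. By Lemma \ref{Lieparalled} each fibre map is a Lie algebra isomorphism, so the $\psi_{\alpha}$ are isomorphisms of Lie algebra bundles and the transition functions $\varphi_{\alpha\beta}\colon U_{\alpha}\cap U_{\beta}\to\Aut\rg$ are well defined; it remains to prove $q\circ\varphi_{\alpha\beta}$ is locally constant. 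To verify the hypothesis $\frac{\partial\varphi_{\alpha\beta}}{\partial X}{\varphi_{\alpha\beta}(x)}^{-1}\in\ad\rg$ of Theorem \ref{LocalCon}, fix $x$, a vector $X$, and a curve $x(s)$ with $x(0)=x$, $\dot{x}(0)=X$. Under the chosen identifications, Lemma \ref{compositionpath} identifies $\varphi_{\alpha\beta}(x(s))$ with the parallel transport $P_{h_{s}}$ along the path $h_{s}$ running radially from $x_{\beta}$ to $x(s)$ in $U_{\beta}$ and then radially from $x(s)$ to $x_{\alpha}$ in $U_{\alpha}$. This is a piecewise smooth homotopy with constant endpoints, so Lemma \ref{baseofholonomy} gives
\[ \partial_{s}\varphi_{\alpha\beta}(x(s))\cdot{\varphi_{\alpha\beta}(x(s))}^{-1}=\int_{0}^{1}R_{s,t}\,dt . \]
Each $R_{s,t}=P_{s,t}\circ(\ad\circ\Omega)\circ P_{s,t}^{-1}$ is a conjugate of the curvature by a Lie algebra isomorphism; since $P\,\ad(u)\,P^{-1}=\ad(Pu)$, we get $R_{s,t}\in\ad\rg$, and hence the integral lies in the linear subspace $\ad\rg$. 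Evaluating at $s=0$ and invoking Theorem \ref{LocalCon} finishes this direction. This derivative computation is the step I expect to be hardest, since it is where the curvature hypothesis must be converted into the local-constancy condition.

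\emph{Sufficiency.} Conversely, given trivializations $\psi_{\alpha}$ with $\varphi_{\alpha\beta}\colon U_{\alpha}\cap U_{\beta}\to{\Aut\rg}^{\delta}$ continuous, I would first build a Lie connection with curvature in $\ad L$. On each $U_{\alpha}$ take the flat connection $\nabla^{\alpha}=\psi_{\alpha}^{*}d$ whose constant sections are parallel; it is a Lie connection because the bracket is constant in the trivialization. With a partition of unity $\{\rho_{\alpha}\}$ subordinate to $\{U_{\alpha}\}$, set $\nabla=\sum_{\alpha}\rho_{\alpha}\nabla^{\alpha}$; since property ($\mathbf{i}$) of Lemma \ref{Lieconnection} is affine and $\sum_{\alpha}\rho_{\alpha}=1$, this convex combination is again a Lie connection. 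In the frame $\psi_{\alpha}$ its connection form is $\omega_{\alpha}=\sum_{\beta}\rho_{\beta}\,{\varphi_{\alpha\beta}}^{-1}d\varphi_{\alpha\beta}$, and by Theorem \ref{LocalCon} each summand is $\ad\rg$-valued, hence so is $\omega_{\alpha}$. As $\ad\rg$ is an ideal of $\Der\rg$ (Proposition \ref{ProInt}), both $d\omega_{\alpha}$ and $[\omega_{\alpha},\omega_{\alpha}]$ remain in $\ad\rg$, so $R^{\nabla}$ is valued in $\ad L$.

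Finally I would extract the coupling from $\nabla$. Regarding the Lie connection as a bundle splitting $\nabla\colon TM\to\D_{Der}(L)$ of the anchor, put $\Xi=\natural\circ\nabla\colon TM\to\Out\D_{Der}(L)$ with $\natural$ the quotient map of Definition \ref{outerderivation}. Then $\Xi$ splits the anchor of $\Out\D_{Der}(L)$, and from $[\nabla_{X},\nabla_{Y}]-\nabla_{[X,Y]}=-R^{\nabla}(X,Y)\in\ad L$, together with the fact that $\natural$ is a Lie algebroid morphism annihilating $\ad L$, I obtain $[\Xi(X),\Xi(Y)]=\Xi([X,Y])$. Hence $\Xi$ is a Lie algebroid homomorphism, that is, a coupling, which closes the equivalence.
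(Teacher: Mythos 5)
Your proposal is correct and follows essentially the same route as the paper: necessity via parallel-transport trivializations built from a Lie connection with $\ad$-valued curvature, the holonomy variation formula of Lemma \ref{baseofholonomy}, and Theorem \ref{LocalCon}; sufficiency via local flat connections in the given charts, glued by a partition of unity and composed with $\natural$. The only divergence is in bookkeeping at the end of the sufficiency direction: the paper verifies $\nabla^{\alpha}_{X}-\nabla^{\beta}_{X}\in\ad L$ on overlaps so that $\Xi$ locally coincides with the flat outer connections $\Xi^{\alpha}$ (hence $R^{\Xi}=0$), whereas you check via the structure equation that the glued connection itself has $\ad L$-valued curvature, which $\natural$ then annihilates; the two verifications are interchangeable.
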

\begin{proof}
By Lemma\ref{Lieconnection}, there is a connection $\nabla$ on $L$ with curvature $R^{\nabla}=ad\circ \Omega$, where $\Omega:TM \bigwedge TM\rightarrow L$. Let $\{U_{\alpha},f_{\alpha}\xrightarrow{\approx} R^{n}\}_{\alpha\in\Delta}$ be a class of charts of $M$.

For each $\alpha\in\Delta$, define $x_{\alpha}\in U_{\alpha}$ by $x_{\alpha}=f_{\alpha}^{-1}(0)$. Let $x\in U_{\alpha}$ be arbitrary and define a smooth path $$\gamma_{\alpha,x}:[0,1]\rightarrow U_{\alpha}\quad by \quad \gamma_{\alpha,x}(t)=f_{\alpha}^{-1}(tf_{\alpha}(x)).$$

By Lemma \ref{Lieparalled} and the path defined above, we can define a new class of charts for $L\rightarrow M $,
\begin{equation}\label{newcharts}
\varphi_{\alpha}:U_{\alpha}\times L_{x_{\alpha}}\rightarrow L_{U_{\alpha}}, \quad\quad\varphi_{\alpha}(x,u)=P_{\gamma_{x,\alpha}}(u)
\end{equation}
for $x\in U_{\alpha}, u\in L_{x_{\alpha}}$.

If $U_{\alpha}\bigcap U_{\beta} \neq \emptyset$, then $\varphi_{\beta}^{-1}\circ\varphi_{\alpha}:U_{\alpha}\bigcap U_{\beta}\times L_{x_{\alpha}}\rightarrow U_{\alpha}\bigcap U_{\beta}\times L_{x_{\beta}}$ is
\begin{equation}\label{intersection}
\begin{array}{lll}
\varphi_{\beta}^{-1}\circ \varphi_{\alpha}(x,u)=(x,P_{\gamma_{x,\beta}^{-1}}\circ P_{\gamma_{x,\alpha}}(u))\\~~~~~~~~~~~~~~~~~~~~=(x,P_{\gamma_{x,\beta}^{-1}\gamma_{x,\alpha}}(u))
\end{array}
\end{equation}
Let us define
\begin{equation}\label{pathinters}
\gamma_{x}=\gamma_{x,\beta}^{-1}\gamma_{x,\alpha}.
\end{equation}

By (\ref{intersection}), (\ref{pathinters}) and the definition of new charts (\ref{newcharts}), the transition function $\varphi_{\alpha\beta}:U_{\alpha}\bigcap U_{\beta}\rightarrow \Aut\rg $ for the new charts is
\begin{equation}\label{transfunc}
\varphi_{\alpha\beta}(x)=P_{\gamma_{x}}.
\end{equation}

Given arbitrary $x_{0}\in U_{\alpha}\bigcap U_{\beta}$ and $X\in T_{x_{0}}U_{\alpha}\bigcap U_{\beta}$. Let $c:[-\varepsilon,\varepsilon]\rightarrow U_{\alpha}\bigcap U_{\beta}$ be a smooth path with $c(0)=x_{0}$ and $\dot{c}(0)=X$.

Define
$$H:[-\varepsilon,\varepsilon]\times [0,1]\rightarrow U_{\alpha}\bigcup U_{\beta}$$
by
\begin{displaymath}
H(s,t) = \left\{\begin{array}{ll}
f_{\alpha}^{-1}(2tf_{\alpha}(c(s)))& \textrm{$0\leq t\leq \frac{1}{2}$},\\
f_{\beta}^{-1}((2-2t)f_{\beta}(c(s))) &\textrm{$ \frac{1}{2}\leq t\leq 1$}.
\end{array}\right.
\end{displaymath}
The map $H$ is a piecewise homotopy and $H(s,\cdot)=h_{s}=\gamma_{c(s)}$, as defined in (\ref{pathinters}), for each $s\in [-\varepsilon,\varepsilon]$.

Let us use $P_{s,t}$ to denote parallel transport over the  path $h_{s}$ from $L_{h_{s}(t)}$ to $L_{h_{s}(1)}$. Since $R^{\nabla}=ad\circ \Omega$,
$$
\begin{array}{lll}
R_{s,t}=P_{s,t}\circ R(\partial_{t}H(s,t),\partial_{s}H(s,t))\circ P_{s,t}^{-1}\\\\
~~~~~~=P_{s,t}\circ (ad\circ \Omega(\partial_{t}H(s,t),\partial_{s}H(s,t)))\circ P_{s,t}^{-1}\\\\
~~~~~~=ad\circ P_{s,t}(\Omega(\partial_{t}H(s,t),\partial_{s}H(s,t)))
\end{array}
$$

By Lemma \ref{baseofholonomy},
$$
\begin{array}{lll}
\partial_{s}P_{s,0}=(\int_{0}^{1}R_{s,t}dt)\cdot P_{s,0}\\\\
~~~~~~~~=ad(\int_{0}^{1}P_{s,t}(\Omega(\partial_{t}H(s,t),\partial_{s}H(s,t)))dt)\cdot P_{s,0}.
\end{array}
$$
Since $P_{s,0}=P_{h_{s}}$ and $h_{s}=\gamma_{c(s)}$, formula (\ref{transfunc}) implies we have $P_{s,0}=\varphi_{\alpha\beta}(c(s))$.

Thus
$$
\begin{array}{lll}
\frac{d\varphi_{\alpha\beta}(c(s))}{ds}|_{s=0}\cdot \varphi_{\alpha\beta}^{-1}(c(0)) = \partial_{s}P_{s,0}|_{s=0}\cdot P_{0,0}^{-1}\\\\
~~~~~~~~~~~~~~~~~~=ad(\int_{0}^{1}P_{s,t}(\Omega(\partial_{t}H(s,t),\partial_{s}H(s,t)))dt)|_{s=0}\in\ad\rg.
\end{array}
$$
That is $\frac{\partial(\varphi_{\alpha\beta})}{\partial X}\cdot \varphi_{\alpha\beta}(x_{0})^{-1}\in ad\rg$.

Then by Theorem \ref{LocalCon}, we have  $\varphi_{\alpha\beta}:U_{\alpha}\bigcap U_{\beta}\rightarrow {\Aut\rg}^{\delta}$. So the Lie algebra bundle $L\rightarrow M$ admits structural group ${\Aut\rg}^{\delta}$.
\\\\
Now let us prove the other direction. As $L\rightarrow M$ admits structural group
${\Aut\rg}^{\delta},$ there is a class of charts $\{U_{\alpha},\varphi_{\alpha}:U_{\alpha}\times\rg\rightarrow L_{U_{\alpha}}\}_{\alpha\in\Delta}$ with transition functions $\{\varphi_{\alpha\beta}:U_{\alpha}\bigcap U_{\beta}\rightarrow {\Aut\rg}^{\delta}\}_{\alpha,\beta\in\Delta}$.

For each $\alpha\in\Delta$, we define a connection $\nabla^{\alpha}$ on $L_{U_{\alpha}}$ that is a bundle morphism $\nabla^{\alpha}:TU_{\alpha}\rightarrow \D_{Der}(L)|_{U_{\alpha}}$, by
\begin{equation}\label{locconn}
\nabla_{X}^{\alpha}s_{\alpha}=\varphi_{\alpha}(\frac{\partial \varphi_{\alpha}^{-1}(s_{\alpha})}{\partial X})
\end{equation}
where $X\in\Gamma(TU_{\alpha};U_{\alpha})$ and $s_{\alpha}\in\Gamma(L_{U_{\alpha}};U_{\alpha})$.

Let $\{h_{\alpha}\}_{\alpha\in\Delta}$ be a partition of unity corresponding to $\{U_{\alpha}\}_{\alpha\in\Delta}$. Then define a connection $\nabla$ on $L$, that is $\nabla:TM\rightarrow \D_{Der}(L)$, by the formula
$$
\nabla_{X}s(m)=\sum\limits_{\alpha\in \Delta}h_{\alpha}(m)\cdot \nabla_{X}^{\alpha}s|_{U_{\alpha}}(m)
$$
where $X\in\Gamma(TM;M)$, $s\in\Gamma(L;M), m\in M$ and the corresponding items in the sum are considered to be zero when $m\notin U_{\alpha}$.

Let us define $\Xi^{\alpha}:TU_{\alpha}\rightarrow \Out\D_{Der}(L)|_{U_{\alpha}}$ by
$$
\Xi^{\alpha}=\natural \circ \nabla^{\alpha}.
$$
By straightforward calculation, for  $X_{\alpha},Y_{\alpha}\in\Gamma(TU_{\alpha};U_{\alpha})$,
$$
\begin{array}{lll}
R^{\Xi^{\alpha}}(X_{\alpha},Y_{\alpha})=\Xi^{\alpha}([X_{\alpha},Y_{\alpha}])-\{\Xi^{\alpha}(X_{\alpha}),\Xi^{\alpha}(Y_{\alpha})\}~\\\\
~~~~~~~~~~~~~~~~~~=\natural\circ R^{\nabla^{\alpha}}(X_{\alpha},Y_{\alpha})=0
\end{array}
$$

Define a bundle homorphism $\Xi:TM\rightarrow \Out\D_{Der}(L)$ by
$$\Xi=\natural\circ \nabla.$$ Actually, due to the definition of $\Xi^{\alpha}$,
\begin{equation}\label{coupling}
\Xi(X_{m})=\sum\limits_{\alpha\in\Delta}h_{\alpha}(m)\cdot \Xi^{\alpha}(X_{m})
\end{equation}
for $X_{m}\in T_{m}M$ and the corresponding items in the sum are considered to be zero if $m\notin U_{\alpha}$.

In the case $U_{\alpha}\bigcap U_{\beta}\neq \emptyset$, choose arbitrary $X\in\Gamma(TU_{\alpha}\bigcap U_{\beta};U_{\alpha}\bigcap U_{\beta})$ and section $s\in\Gamma(L_{U_{\alpha}\bigcap U_{\beta}};U_{\alpha}\bigcap U_{\beta})$. Then
$$
\begin{array}{lll}
\nabla^{\beta}_{X}s=\varphi_{\beta}(\frac{\partial \varphi_{\beta}^{-1}(s)}{\partial X})\\\\
~~~~~~~=\varphi_{\beta}(\frac{\partial\varphi_{\beta}^{-1}\circ\varphi_{\alpha}\circ\varphi_{\alpha}^{-1}(s)}{\partial X})
=\varphi_{\beta}(\frac{\partial(\varphi_{\alpha\beta}\circ\varphi_{\alpha}^{-1}(s))}{\partial X})\\\\
~~~~~~~=\varphi_{\beta}\circ \varphi_{\alpha\beta}\circ (\frac{\partial \varphi_{\alpha}^{-1}(s)}{\partial X})+ \varphi_{\beta}\circ(\frac{\partial \varphi_{\alpha\beta}}{\partial X})\circ \varphi_{\alpha}^{-1}(s)\\\\
~~~~~~~=\varphi_{\beta}\circ \varphi_{\beta}^{-1}\circ \varphi_{\alpha}\circ (\frac{\partial \varphi_{\alpha}^{-1}(s)}{\partial X})+\varphi_{\beta}\circ(\frac{\partial \varphi_{\alpha\beta}}{\partial X})\circ \varphi_{\alpha}^{-1}(s)\\\\
~~~~~~~=\nabla^{\alpha}_{X}s+\varphi_{\alpha}\circ\varphi_{\alpha}^{-1}\circ \varphi_{\beta}\circ (\frac{\partial \varphi_{\alpha\beta}}{\partial X})\circ \varphi_{\alpha}^{-1}(s)\\\\
~~~~~~~=\nabla^{\alpha}_{X}s+\varphi_{\alpha}\circ (\varphi_{\alpha\beta}^{-1}\cdot (\frac{\partial \varphi_{\alpha\beta}}{\partial X}) )\circ \varphi_{\alpha}^{-1}(s)\\\\
~~~~~~~=\nabla^{\alpha}_{X}s+[\varphi_{\alpha}(h_{\alpha\beta}(X)),s]
\end{array}
$$
here $h_{\alpha\beta}:TU_{\alpha}\bigcap U_{\beta}\rightarrow U_{\alpha}\bigcap U_{\beta}\times\rg $ such that $\varphi_{\alpha\beta}^{-1}\cdot (\frac{\partial \varphi_{\alpha\beta}}{\partial X})=ad\circ h_{\alpha\beta}(X)$ is from the fact that $\varphi_{\alpha\beta}:U_{\alpha}\bigcap U_{\beta}\rightarrow {\Aut\rg}^{\delta}$ and Theorem \ref{LocalCon}.

Thus $\nabla_{X}^{\alpha}-\nabla_{X}^{\beta}\in adL$. So $\Xi^{\alpha}=\Xi^{\beta}$ on $TU_{\alpha}\bigcap U_{\beta}$.
Hence $\Xi|_{TU_{\alpha}}=\Xi^{\alpha}$. The bundle morphism $\Xi:TM\rightarrow \Out\D_{Der}(L)$ defined in (\ref{coupling}) is a coupling since $R^{\Xi}|_{U_{\alpha}}=R^{\Xi^{\alpha}}=0$ for arbitrary $\alpha\in\Delta$.

\end{proof}

\section*{Acknowledgement}
The second version of the article is different from the first because of the improvement of the text by invaluable efforts of professor J.Stasheff, to whom we submit our sincere gratitude.

\bibliographystyle{plain}
\bibliography{Mish-li-2013-10-22-2nd}
\end{document}